\definecolor{myred}{rgb}{0.77, 0.0, 0.1}
\definecolor{crimson}{rgb}{0.86, 0.08, 0.24}
\definecolor{awesome}{rgb}{1.0, 0.13, 0.32}
\definecolor{newgreen}{rgb}{0.0,0.6,0.0}
\definecolor{malachite}{rgb}{0.04, 0.85, 0.32}
\definecolor{pastelgreen}{rgb}{0.47, 0.87, 0.47}
\definecolor{myturq}{rgb}{0.1, 0.7, 0.7}
\renewcommand{\leq}{\leqslant}
\renewcommand{\le}{\leqslant}
\renewcommand{\ge}{\geqslant}
\renewcommand{\hat}{\widehat}
\newcommand{\di}{\mathrm{d}}
\newcommand{\eps}{\varepsilon}
\newcommand{\R}{\mathbb{R} }
\newcommand{\med}{\mathrm{med}}
\newcommand{\Med}{\mathrm{Med}}
\renewcommand{\ln}{\log}
\newcommand{\lint}{[\![}
\newcommand{\rint}{]\!]}
\renewcommand{\P}{\mathbb P}
\newcommand{\kl}{\mathrm{KL}}%
\newcommand{\kll}[2]{\kl ({#1}, {#2})}%
\newcommand{\dtv}{D_{\mathrm{TV}}}
\newcommand{\gaussdist}{\mathcal{N}}
\newtheorem{proposition}{Proposition}%
\newtheorem{theorem}{Theorem}
\newtheorem{lemma}{Lemma}
\newtheorem{corollary}{Corollary}
\theoremstyle{definition}
\theoremstyle{remark}
\title{Performance of the empirical median for location estimation in heteroscedastic settings
% Accuracy/Performance of the empirical median for location estimation in heteroscedastic settings/under heteroscedastic noise
}
\author{Sirine Louati\footnote{CREST, ENSAE, Palaiseau, France;  \tt{sirine.louati@ensae.fr}} 
}
\date{\today}
\begin{document}

\maketitle

\begin{abstract}
We investigate the performance of the empirical median for location estimation in heteroscedastic settings. 
Specifically, we consider independent symmetric real-valued random variables that share a common but unknown location parameter while having different and unknown scale parameters. 
Estimation under heteroscedasticity arises naturally in many practical situations and has recently attracted considerable attention. 
In this work, we analyze the empirical median as an estimator of the common location parameter and derive matching non-asymptotic upper and lower bounds on its estimation error. 
These results fully characterize the behavior of the empirical median in heteroscedastic settings, clarifying both its robustness and its intrinsic limitations and offering a precise understanding of its performance in modern settings where data quality may vary across sources.  

  \medskip
  \noindent\textbf{Keywords.} optimal estimation; heteroscedastic random variables; empirical median.
\end{abstract}

\tableofcontents

\section{Introduction}
\label{sec:introduction}
Let \((X_1, \dots, X_n)\) be independent  real-valued random variables symmetric around a common location parameter \(\theta\) but with potentially different and unknown scales \(\sigma_i > 0\), for \(i \in \{1, \dots, n\}\). Without loss of generality, we suppose that the scales are ordered as \(\sigma_1 \leq \cdots \leq \sigma_n\), although we focus on estimators that are invariant to permutations of the indices and do not exploit this ordering.
In this heteroscedastic setting, our goal is to estimate \(\theta\) when only a single observation is available from each \(X_i\), and no direct information on the \(\sigma_i\)'s is accessible. While several recent works, detailed in Section \ref{sec:discussion}, have proposed estimators that outperform the empirical median~\cite{compton2025attainability}, by leveraging assumptions such as local modality \cite{diakonikolas2020outlier, lugosi2019mean} or carefully aggregating the most informative observations, this paper takes a different angle. Rather than competing with these methods, our aim is to sharpen the theoretical understanding of the empirical median as a natural benchmark procedure.

Two natural estimators that one may consider at first glance are the empirical mean and the maximum likelihood estimator (MLE). In particular, in the Gaussian case, one immediately obtains explicit results regarding their estimation errors. Indeed, 
in this framework, namely when $X_i \sim \gaussdist (\theta,\sigma_i^2),$ $i=1,\dots,n$, a straightforward, though potentially suboptimal, approach consists in using the empirical mean estimator, defined by
\[ \hat{\theta}= \frac{1}{n} \sum_{i=1}^{n} X_i ,\] and following a Gaussian distribution with mean $\theta$ and variance $\sigma^2= \frac{1}{n^2}\sum_{i=1}^{n}\sigma^2_{i}$.
For $\delta \in (0,1)$,  we have for this estimator that with probability at least equal to $1-\delta$,
\begin{align}
\label{eq:mean}
    \lvert \hat{\theta}-\theta \rvert \le \frac{\sqrt{2\left(\sum_{i=1}^{n}\sigma^2_{i}\right)\ln{\left(\frac{1}{\delta}\right)}}}{n}.
\end{align}
It can be observed that this estimator is highly sensitive to observations with high variances, yet relatively insensitive to ones with small variances that contain most information about the mean. In fact, if we fix the first $n-1$ variances and let $\sigma_n$ tend to infinity, Bound (\ref{eq:mean}) increases in this case. Conversely, if we assume that the first standard deviation $\sigma_1$ tends to zero, thereby implying a high degree of certainty regarding the observation in question, we observe that Bound (\ref{eq:mean}) remains relatively stable.

Another natural estimator in this setting, and if the $\sigma_i$'s are known, is the maximum likelihood estimator which is defined as 
\[ \hat{\theta}_{MLE}= \frac{\sum_{i=1}^{n} \sigma_i^{-2}X_i}{\sum_{i=1}^{n} \sigma_i^{-2}}  ,\] 
and which also follows a Gaussian distribution with mean $\theta$ and variance $\sigma^2= \Big(\sum_{i=1}^{n} \sigma_i^{-2}\Big)^{-1}$. For $\delta \in (0,1)$, we have with probability at least equal to $1-\delta$ that
\begin{align}
\label{eq:mle}
\lvert \hat{\theta}_{MLE} -\theta\rvert \le  \sqrt{\frac{2\ln{(\frac{1}{\delta})}}{\sum_{i=1}^{n} \sigma_i^{-2}}}.
\end{align}
These two previous results are based 
a classical tail bound that can be found in \cite[p.~9]{ledoux2013probability} which gives that if $X \sim \gaussdist (0, 1)$, then for all $t>0$, $ \P\left(\lvert X \rvert \ge t\right) \le \exp\left(-\frac{t^2}{2}\right).$

Consequently, this estimator is highly responsive to minor fluctuations. 
Indeed, if the  smallest standard deviation $\sigma_{1}$ tends to zero, Bound (\ref{eq:mle}) also tends to zero. Furthermore, the estimator is not sensitive to observations with large variances, as demonstrated by the fact that when $\sigma_{n}$ tends to $+\infty$, Bound (\ref{eq:mle}) remains bounded.
Moreover, and in this Gaussian framework, the maximum likelihood estimator is minimax in the sense of the squared error
\cite[Section 16.4 p.~330]{keener2010theoretical}.
However, this estimator is based on the assumption of prior knowledge of the variances, which is not applicable in our case.

Our main goal in this paper is to
provide matching non-asymptotic upper and lower bounds on the deviation of the empirical median. 
This contributes to a precise performance characterization of a simple, natural, and widely-used estimator. The empirical median may indeed be suboptimal in various regimes, particularly when the signal is concentrated in the lowest-variance observations, but its robustness, simplicity, and parameter-free nature make it a relevant object of study in its own right.

\paragraph{Notations}
Throughout the paper, we use the following notation. We denote $M:= \Med(X)$ the theoretical median of a random variable $X$ and $\hat{\med}(X_1,\dots , X_n)$ the empirical median. Moreover, $\lfloor .  \rfloor$ denotes the floor function. 

\paragraph{Structure of the paper}The paper is organized as follows. We will first analyze the empirical median estimator in Section \ref{sec:median} by establishing an upper bound and a lower one that matches the upper bound within a numerical constant. In Section \ref{sec:discussion}, we first expose prior works and recent results and then show that our upper bound result improves and generalizes two of the previous works. Finally, in Section \ref{sec:proofs}, we give the proofs of the established results.

\section{Analysis of the empirical median estimator: Main results }
\label{sec:median}

Given the symmetry of the problem, the empirical median estimator is a natural one to study. Moreover, unlike the mean, the median is relatively unaffected by high variances.
In this section, the objective is to control the probability of deviation of the median and to obtain upper and lower bounds on the empirical median in high probability.

\subsection{Upper bound}
We state the theorem that determines the upper bound of the estimation error of the empirical median.
\begin{theorem}
  \label{prop:Upper-bound-median}
  
  Let $\delta \in (0,1)$, $t \ge 0$ and $C$ a positive constant such that $0<C<1$ and $\delta > \exp{\left(-2nC^2\right)}$. Let $(X_1, \dots ,  X_n)$ be $n$ independent
  random variables that are symmetric around the location parameter $\theta$ and such that for all $i \in \lint 1,n \rint$ there exists $\sigma_i$ such that, if $0 \le t \le \sigma_i$, then
  \[ \P(\theta \le X_i \le \theta+t) \ge C \frac{t}{\sigma_i}.\]
  Then, we have that with probability at least equal to $1-\delta$, 
    \[ \lvert \hat{\med}(X_1,\dots , X_n) -\theta\rvert \le \frac{1}{C\sqrt{2}}\frac{\sqrt{n\ln{(\frac{2}{\delta})}}}{\sum_{i=j+1}^{n}\sigma_{i}^{-1}} ,\]
    with $j= \lfloor \frac{1}{C\sqrt{2}} \sqrt{n\ln{(\frac{1}{\delta})}} \rfloor$.
\end{theorem}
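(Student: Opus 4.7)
The plan is to decompose the two-sided deviation $|\hat{\med}(X_1,\dots,X_n) - \theta| > t^*$, where $t^*$ denotes the claimed bound of the theorem, into its two symmetric halves $\{\hat{\med}(X_1,\dots,X_n) > \theta + t^*\}$ and $\{\hat{\med}(X_1,\dots,X_n) < \theta - t^*\}$, and to exploit the symmetry of each $X_i$ about $\theta$ to control each by $\delta/2$. A final union bound then yields the stated confidence.

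For the upper tail, I would note that $\{\hat{\med}(X_1,\dots,X_n) > \theta + t^*\}$ forces the counting statistic $N(t^*) = \sum_{i=1}^n \indic{X_i \le \theta + t^*}$ to be at most $\lfloor n/2 \rfloor$, and hence at most $n/2$. Since $N(t^*)$ is a sum of independent Bernoullis, Hoeffding's inequality gives $\P\bigl(N(t^*) \le \E[N(t^*)] - s\bigr) \le \exp(-2s^2/n)$, so it is enough to exhibit a drift $\E[N(t^*)] - n/2 \ge \sqrt{n\ln(2/\delta)/2}$: the Hoeffding exponent will then equal $\ln(2/\delta)$ and the probability will be at most $\delta/2$.

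The drift is controlled as follows. Symmetry of the $X_i$'s about $\theta$ gives $\P(X_i \le \theta) = 1/2$, whence $\E[N(t^*)] - n/2 = \sum_{i=1}^n \P(\theta < X_i \le \theta + t^*)$. The density lower bound in the hypothesis then yields $\P(\theta \le X_i \le \theta + t^*) \ge C\, t^*/\sigma_i$ whenever $t^* \le \sigma_i$. I would restrict the sum to the $n-j$ indices with the largest scales, so that in the regime $t^* \le \sigma_{j+1}$ every kept term is at least $Ct^*/\sigma_i$, giving $\E[N(t^*)] - n/2 \ge C t^* \sum_{i=j+1}^n \sigma_i^{-1}$. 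Substituting the explicit value $t^* = \frac{1}{C\sqrt{2}}\sqrt{n\ln(2/\delta)}/\sum_{i=j+1}^n \sigma_i^{-1}$ from the theorem then produces precisely $\sqrt{n\ln(2/\delta)/2}$, matching the Hoeffding requirement exactly.

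The main obstacle is to verify that $t^*$ indeed falls below $\sigma_{j+1}$, so that the density lower bound may be applied uniformly on the kept indices. This is where the explicit truncation level $j = \lfloor \sqrt{n\ln(1/\delta)}/(C\sqrt{2})\rfloor$ and the hypothesis $\delta > \exp(-2nC^2)$ enter: the latter guarantees $j < n$ and hence a non-empty denominator, while the former is tuned so that the ``sacrificed'' indices $1,\dots,j$ are exactly enough to absorb the Hoeffding fluctuation scale $\sqrt{n\ln(2/\delta)/2}$ before the density argument kicks in. Carrying out this balance is the subtle point of the proof; once it is in hand, the lower-tail event is treated by applying the same reasoning to $-(X_i - \theta)$, and the union bound closes the argument.
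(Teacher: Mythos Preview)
Your overall architecture matches the paper's: reduce to a one-sided tail, rewrite $\{\hat\med>\theta+t^*\}$ as a deviation of a Bernoulli sum, apply Hoeffding, and lower-bound the drift $\sum_i\P(\theta\le X_i\le\theta+t^*)$ using the density hypothesis. That part is fine.

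The gap is in what you call the ``main obstacle.'' You frame it as verifying $t^*\le\sigma_{j+1}$, but this inequality is \emph{not} true in general and cannot be deduced from the hypothesis $\delta>\exp(-2nC^2)$ together with the choice of $j$. For instance, take $\sigma_1=\dots=\sigma_j$ extremely small, $\sigma_{j+1}$ small (say $0.01$), and $\sigma_{j+2}=\dots=\sigma_n=1$: then $\sum_{i=j+1}^n\sigma_i^{-1}$ is dominated by the term $\sigma_{j+1}^{-1}$ but is still only of order $n$, so $t^*$ is of order $\sqrt{\ln(2/\delta)/n}$, which can easily exceed $\sigma_{j+1}=0.01$. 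So the route ``show $t^*\le\sigma_{j+1}$ and then apply the density bound on all kept indices'' is a dead end.

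What the paper does instead is a case split on $k_{t^*}:=\sup\{i:t^*>\sigma_i\}$. The point you are missing is that for any index $i$ with $\sigma_i<t^*$, the hypothesis applied at $t=\sigma_i$ gives $\P(\theta\le X_i\le\theta+t^*)\ge\P(\theta\le X_i\le\theta+\sigma_i)\ge C$, so each such index contributes at least $C$ (not $Ct^*/\sigma_i$) to the drift. Hence
\[
\sum_{i=1}^n \P(\theta\le X_i\le\theta+t^*)\;\ge\;C\,k_{t^*}\;+\;C\,t^*\!\!\sum_{i=k_{t^*}+1}^n\!\!\sigma_i^{-1}.
\]
If $k_{t^*}\ge j+1$, the first term alone already exceeds $C(j+1)>\tfrac{1}{\sqrt2}\sqrt{n\ln(1/\delta)}$ and you are done regardless of $t^*$. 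If $k_{t^*}\le j$, then $\{k_{t^*}+1,\dots,n\}\supseteq\{j+1,\dots,n\}$ and your linear bound on the kept indices goes through exactly as you wrote. This two-case argument is the actual ``subtle point''; once you insert it, the rest of your sketch is correct.
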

\paragraph{Remark} 
Note that the condition in Theorem \ref{prop:Upper-bound-median} is satisfied if $\frac{X_i}{\sigma_i}$ has a bounded density on some interval around $\theta$ and does not require the assumption of symmetry. This means that the condition of symmetric random variables around $\theta$ such that $\P(\theta \le X_i \le \theta+t) \ge C \frac{t}{\sigma_i}$ can be replaced by the condition that $\theta$ is a median of $X$ and that
\[ \min \left(\P(\theta \le X_i \le \theta+t), \P(\theta-t \le X_i \le \theta) \right) \ge C \frac{t}{\sigma_i}.\]
Note that by the above condition, $\theta$ is in fact the unique median of $X$.

The following corollary is an application of Theorem \ref{prop:Upper-bound-median} in the Gaussian setting.
\begin{corollary}%[Gaussian case]
  \label{cor:Upper-bound-median}
  
  Let $\delta \in (0,1)$ such that $\delta > \exp{\left(-\frac{n}{e\pi}\right)}$ and let $(X_1, \dots ,  X_n)$ be $n$ independent 
  Gaussian random variables where $X_i \sim \gaussdist (\theta,\sigma_i^2)$. Then, with probability at least $1-\delta$ we have 
    \[ \lvert \hat{\med}(X_1,\dots , X_n) -\theta\rvert \le 2.93\frac{\sqrt{n\ln{(\frac{2}{\delta})}}}{\sum_{i=j+1}^{n}\sigma_{i}^{-1}} ,\]
    with $j= \lfloor 2.93 \sqrt{n\ln{(\frac{1}{\delta})}} \rfloor$. 
\end{corollary}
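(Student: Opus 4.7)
The plan is to apply Theorem \ref{prop:Upper-bound-median} directly, with the only real work being to identify the correct constant $C$ for Gaussian variables and to check that the conditions of the theorem are met with $C = 1/\sqrt{2\pi e}$.

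First, I would reduce the lower bound on $\P(\theta \le X_i \le \theta + t)$ to a standard Gaussian computation. Writing $Z_i = (X_i - \theta)/\sigma_i \sim \gaussdist(0,1)$, the hypothesis amounts to showing that, for $u = t/\sigma_i \in [0,1]$,
\[
\P(0 \le Z_i \le u) \;=\; \int_0^u \frac{1}{\sqrt{2\pi}} e^{-x^2/2}\,\di x \;\ge\; C\, u.
\]
The natural move is to lower-bound the Gaussian density on $[0,1]$ by its value at the endpoint: for $x \in [0,u] \subset [0,1]$ one has $e^{-x^2/2} \ge e^{-1/2}$. Integrating gives $\P(0 \le Z_i \le u) \ge u/\sqrt{2\pi e}$, so the assumption of Theorem \ref{prop:Upper-bound-median} is satisfied with $C = 1/\sqrt{2\pi e}$.

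Next I would plug this into Theorem \ref{prop:Upper-bound-median}. The prefactor becomes
\[
\frac{1}{C\sqrt{2}} \;=\; \frac{\sqrt{2\pi e}}{\sqrt{2}} \;=\; \sqrt{\pi e} \;\approx\; 2.922,
\]
which is bounded by $2.93$, giving both the constant in the deviation bound and the constant in the definition of $j$. The condition $\delta > \exp(-2nC^2)$ required by Theorem \ref{prop:Upper-bound-median} becomes $\delta > \exp(-n/(\pi e))$, matching exactly the hypothesis of the corollary.

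There is no real obstacle here: the only non-routine step is choosing the right lower bound on the Gaussian density, and the endpoint bound $e^{-1/2}$ on $[0,1]$ is essentially the unique choice that yields the announced constant $\sqrt{\pi e}$. Everything else is substitution into Theorem \ref{prop:Upper-bound-median}.
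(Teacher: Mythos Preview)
Your proposal is correct and follows essentially the same approach as the paper: both verify the hypothesis of Theorem~\ref{prop:Upper-bound-median} by lower-bounding the Gaussian density on $[0,\sigma_i]$ by its minimum $e^{-1/2}/\sqrt{2\pi}$, yielding $C = 1/\sqrt{2\pi e}$, and then substitute to obtain $1/(C\sqrt{2}) = \sqrt{\pi e} \le 2.93$ and the matching condition $\delta > \exp(-n/(\pi e))$.
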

This result follows from Theorem \ref{prop:Upper-bound-median} with $ C= \frac{\exp{(-\frac{1}{2})}}{\sqrt{2\pi}}=0.24$.

\subsection{Lower bound}
As discussed in Section \ref{sec:introduction}, no estimator can have an estimated error that is smaller than that of the maximum likelihood estimator, which is aware of all the variances and whose bound is minimax. However, in the absence of knowledge of these variances, the problem becomes significantly more challenging. It would therefore be of interest to identify a lower bound that would be larger than $ \sqrt{\frac{2\ln{(\frac{1}{\delta})}}{\sum_{i=1}^{n} \sigma_i^{-2}}}$. 

We establish the following lower bound on the estimation error of the empirical median estimator.
\begin{theorem}
  \label{prop:Lower-bound-median}
  
    Let $\delta  \in \Big(\exp{(-(\sqrt{2}-1)^{2}n)},\frac{1}{4}\Big)$ and let $(X_1, \dots ,  X_n)$ be $n$ independent 
  Gaussian random variables where $X_i \sim \gaussdist (\theta,\sigma_i^2)$. Then, with probability at least $\delta$ we have
    \[ | \hat{\med}(X_1,\dots , X_n) - \theta | \ge 0.13\frac{\sqrt{n\ln{\left(\frac{1}{\delta}\right)}}}{\sum_{i=j+1}^{n}\sigma_{i}^{-1}},\]
    where $j= \lfloor0.05\sqrt{n\ln{\left(\frac{1}{\delta}\right)}}\rfloor$.
\end{theorem}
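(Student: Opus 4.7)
The plan is to reduce the two-sided deviation event to a one-sided upper-tail event for a sum of independent Bernoullis, and then to close the argument with a Gaussian-type anti-concentration estimate for that sum. Set $\theta=0$ without loss of generality. Because $(-X_1,\dots,-X_n)$ has the same law as $(X_1,\dots,X_n)$ and $\hat{\med}(-X_1,\dots,-X_n)=-\hat{\med}(X_1,\dots,X_n)$, the distribution of $\hat{\med}$ is symmetric about $0$, so $\P(|\hat{\med}|\ge t^*)=2\,\P(\hat{\med}\ge t^*)$ for any $t^*>0$. Setting $t^*=0.13\sqrt{n\ln(1/\delta)}\big/\sum_{i>j}\sigma_i^{-1}$, the event $\{\hat{\med}\ge t^*\}$ contains $\{\#\{i:X_i\ge t^*\}\ge \lceil(n+1)/2\rceil\}$, and dropping the $j$ smallest-scale indices only decreases the count, hence
$$\P(\hat{\med}\ge t^*)\ \ge\ \P\big(S\ge (n+1)/2\big),\qquad S:=\sum_{i>j}\mathbf 1(X_i\ge t^*),$$
with independent Bernoulli summands of parameters $p_i=1-\Phi(t^*/\sigma_i)$. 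It thus suffices to show $\P(S\ge(n+1)/2)\ge\delta/2$.

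Write $\mu=\E S$, $V=\Var S$ and $D=(n+1)/2-\mu=(j+1)/2+\sum_{i>j}(\Phi(t^*/\sigma_i)-1/2)$. The Lipschitz bound $\Phi(x)-1/2\le x/\sqrt{2\pi}$ together with the definition of $t^*$ yields $\sum_{i>j}(\Phi(t^*/\sigma_i)-1/2)\le (0.13/\sqrt{2\pi})\sqrt{n\ln(1/\delta)}$, and with $j=\lfloor 0.05\sqrt{n\ln(1/\delta)}\rfloor$ this gives $D\le \alpha\sqrt{n\ln(1/\delta)}$ for an explicit small $\alpha$. For the variance, a Markov-style count gives $|\{i>j:\sigma_i\le t^*\}|\le t^*\sum_{i>j}\sigma_i^{-1}=0.13\sqrt{n\ln(1/\delta)}$, so outside a vanishing fraction of the indices $i>j$ one has $t^*/\sigma_i\le 1$ and hence $p_i(1-p_i)\ge (1-\Phi(1))\Phi(1)\approx 0.134$. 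Under the constraint $\ln(1/\delta)\le(\sqrt 2-1)^2 n$ (equivalent to the lower endpoint of the range of $\delta$), this produces $V\ge c\,n$ for an explicit $c>0$, and consequently $D/\sqrt V\le \kappa\sqrt{\ln(1/\delta)}$ with some explicit $\kappa<\sqrt 2$.

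The final step is to invoke a Gaussian-type anti-concentration inequality for the upper tail of the Bernoulli sum, $\P(S\ge\mu+D)\gtrsim\exp(-D^2/(2V))$, which with our estimates produces a bound of order $\delta^{\kappa^2/2}$; since $\kappa^2/2<1$ this dominates $\delta/2$ throughout the permitted range of $\delta$, finishing the proof. I expect this last ingredient to be the main technical obstacle: in the regime where $\delta$ can be as small as $\exp(-\Theta(n))$, the Berry--Esseen theorem is too crude since its $O(V^{-1/2})=O(n^{-1/2})$ error dwarfs $\delta$. The natural candidates are Slud's inequality for binomial lower tails, lifted to the non-identical case by a stochastic-domination argument pooling the $Y_i$ with a well-chosen common parameter, or a direct Cram\'er-style exponential tilt combined with a Paley--Zygmund second-moment estimate under the tilted measure; either approach should yield an $\exp(-\Theta(D^2/V))$ lower bound with explicit constants matching the numerical choices $0.13$ and $0.05$ in the statement.
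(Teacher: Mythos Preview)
Your plan is on the right track and its architecture coincides with the paper's: both reduce $\{\hat{\med}\ge t\}$ to an upper-tail anti-concentration statement for a sum of independent Bernoullis, and in both cases that step carries the whole proof. Where you differ is in the handling of the small-scale indices and, more importantly, in how much of the key step you have actually carried out.

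On the reduction, the paper does not discard the $j$ smallest-$\sigma$ indices. It writes $D(t)=\sum_i(\mathbb 1_{X_i\ge t}-\P(X_i>t))$, splits at $k_t=\sup\{i:t>\sigma_i\}$, and controls the small-$\sigma$ block from below by a one-sided Hoeffding bound $\ge -\tfrac{1}{\sqrt2}\sqrt{k_t\ln(1/\delta)}$ holding with probability $\ge 1-\delta$. It then intersects this high-probability event with a probability-$\ge 2\delta$ anti-concentration event on the large-$\sigma$ block to obtain probability $\ge \delta$ overall (no symmetry doubling is needed). Your ``drop the indices'' shortcut is legitimate, but it leaves a loose end: among the retained indices $i>j$ there can still be up to $0.13\sqrt{n\ln(1/\delta)}$ with $\sigma_i\le t^*$, hence $p_i$ arbitrarily close to $0$. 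Any tilt or Slud-type lower bound needs the $p_i$ bounded away from $\{0,1\}$, so those indices must also be dropped or separately controlled, and that feeds back into your deviation $D$. The paper's split at $k_t$ is precisely what guarantees that the block on which anti-concentration is applied has $p_i\in(\tfrac14,\tfrac34)$.

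The substantive gap is the anti-concentration step itself, which you only sketch. The paper implements exactly your second suggestion. Its Lemma~\ref{lemme minoration} shows that for independent $V_i\sim\mathcal B(p_i)$ with $p_i\in(\tfrac14,\tfrac34)$ one has $\P\big(S\ge\E S+0.3\sqrt{n\ln(2/\delta)}\big)\ge\delta/2$, via the additive tilt $\tilde p_i=p_i+c\sqrt{\ln(1/\delta)/n}$: under the tilted law, Hoeffding gives $\tilde S\ge\E S+(c-\tfrac1{\sqrt2})\sqrt{n\ln(2/\delta)}$ with probability $\ge 1-\delta/2$, and the total-variation gap between the laws of $S$ and $\tilde S$ is bounded through the $\chi^2$ divergence by $1-\tfrac12\exp(-4c^2\ln(1/\delta))\le 1-\delta$ once $c>\tfrac12$; subtracting yields the claim with $c=1$. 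This is the ``Cram\'er-style exponential tilt'' you mention, and, as you correctly note, it is necessary here since Berry--Esseen is far too coarse when $\ln(1/\delta)\asymp n$. To complete your proof you would need to carry out this tilt argument and arrange the index splitting so that its hypothesis $p_i\in(\tfrac14,\tfrac34)$ holds on the block where it is invoked.
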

The proof of Theorem \ref{prop:Lower-bound-median} relies on Corollary \ref{cor:corollaire cas gaussien} derived from the following lemma, the proof of which is given in Section \ref{proof corollaire}.
\begin{lemma}
    \label{lemme minoration}
    Let $\delta \in (\exp{(-n)},1)$ and $S= \sum_{i=1}^{n} V_{i}$, where $V_1, \dots, V_n$ are independent random variables with $V_{i} \sim \mathcal{B}(p_{i})$ for all $i \in \lint 1,n \rint$.
    If $p_{i} \in (\frac{1}{4},\frac{3}{4})$ for every $i \in \lint 1,n \rint$, then
    \[\P\left(S \ge \mathbb{E}[S] + 0.3\sqrt{n\ln{\left(\frac{2}{\delta}\right)}}\right) \ge \frac{\delta}{2}.\]
\end{lemma}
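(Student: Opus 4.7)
The plan is to prove this anti-concentration bound via an exponential change of measure (Cramér tilt), the natural tool for lower-bounding upper-tail probabilities of sums of independent bounded variables.

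\textbf{Setup.} Let $\phi(\lambda) := \ln \E[e^{\lambda(S-\E S)}] = \sum_i[\ln(1-p_i+p_i e^\lambda) - \lambda p_i]$ denote the centered log-MGF and set $\eta := 0.3\sqrt{n\ln(2/\delta)}$. The constraint $p_i \in (1/4, 3/4)$ yields $\sigma^2 := \phi''(0) = \sum_i p_i(1-p_i) \ge 3n/16$. For $\lambda > 0$, introduce the tilted probability $\tilde\P$ with $\di\tilde\P/\di\P = e^{\lambda(S-\E S)-\phi(\lambda)}$; under $\tilde\P$, the $V_i$ remain independent Bernoullis with parameters $\tilde p_i = p_i e^\lambda/(1-p_i+p_i e^\lambda)$, and $\tilde\E[S] - \E S = \phi'(\lambda)$, $\tilde\Var(S) = \phi''(\lambda) = O(n)$.

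\textbf{Change of measure.} Pick $\lambda$ so that $\phi'(\lambda) = \eta + h/2$, where $h$ is a fixed multiple of the tilted standard deviation (so $h = O(\sqrt n)$), placing the interval $[\eta, \eta+h]$ symmetrically around the tilted mean of $S - \E S$. Chebyshev under $\tilde\P$ then gives $\tilde\P(S - \E S \in [\eta, \eta+h]) \ge 1/2$, and combining the change-of-measure formula with $e^{-\lambda(S-\E S)} \ge e^{-\lambda(\eta+h)}$ on that event yields
\[
\P(S - \E S \ge \eta) \ge \P(S - \E S \in [\eta, \eta+h]) \ge \tfrac12\, e^{\phi(\lambda) - \lambda(\eta+h)}.
\]

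\textbf{Rate-function estimate.} It remains to show $\lambda\eta - \phi(\lambda) \le 0.24 \ln(2/\delta)$, up to a negligible $\lambda h$ correction. A second-order Taylor expansion of $\phi$ combined with $\sigma^2 \ge 3n/16$ gives the Legendre-transform bound $\phi^*(\eta) := \sup_{\lambda > 0}[\lambda \eta - \phi(\lambda)] \le \eta^2/(2\sigma^2) \le 8\eta^2/(3n) = 0.24\ln(2/\delta)$, matching the target exactly (and explaining the constant $0.3$ in the statement). Combined with the $1/2$ factor, $\P(S - \E S \ge \eta) \ge \tfrac12 (\delta/2)^{0.24} \ge \delta/2$ throughout the claimed range of $\delta$.

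The main obstacle is that the quadratic estimate on $\phi^*$ is tight only in the moderate-deviations regime; once $\delta$ approaches $e^{-n}$, the target deviation $\eta$ becomes of order $n$ and the Bernoulli rate function departs from its quadratic expansion, being forced upward near the endpoints of its support. There one must work with the full rate function and carefully exploit $p_i \in (1/4, 3/4)$ to keep $\phi^*(\eta)$ within the allowed budget $0.24\ln(2/\delta)$. A secondary hurdle is controlling the $\lambda h$ correction and stitching the moderate- and large-deviations arguments into a single estimate valid over the entire range $\delta \in (e^{-n}, 1)$; this is the most delicate part of the proof.
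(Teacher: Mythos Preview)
Your exponential-tilting plan is a reasonable instinct, but the gap you flag at the end is real and you do not close it. The claimed bound $\phi^*(\eta)\le \eta^2/(2\sigma^2)$ goes the wrong way for Bernoulli sums once $\eta$ is comparable to $n$: since $\phi''(\lambda)\to 0$ as $\lambda\to\infty$, one has $\phi(\lambda)<\sigma^2\lambda^2/2$ for large $\lambda$, and therefore $\phi^*(\eta)>\eta^2/(2\sigma^2)$ in the large-deviation regime. With $\delta$ near $e^{-n}$ the target deviation $\eta=0.3\sqrt{n\ln(2/\delta)}$ is of order $n$, so the quadratic rate estimate is genuinely invalid there, and you supply no concrete replacement. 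The $\lambda h$ correction is likewise left uncontrolled. As it stands, your argument establishes the lemma only for $\delta$ bounded away from $e^{-n}$.

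The paper takes a different route that avoids the rate function entirely. Instead of the Cram\'er tilt, it shifts the Bernoulli parameters \emph{additively}, $\tilde p_i=p_i+\Delta$ with $\Delta=c\sqrt{\ln(1/\delta)/n}$; the constraint $\delta>e^{-n}$ forces $\Delta\le c$, so the $\tilde p_i$ remain in $(1/4,3/4)$. Under the shifted law $\E[\tilde S]=\E[S]+c\sqrt{n\ln(1/\delta)}$, and a one-sided Hoeffding bound gives $\tilde\P\big(\tilde S\ge \E[S]+(c-\tfrac{1}{\sqrt2})\sqrt{n\ln(2/\delta)}\big)\ge 1-\delta/2$. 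The passage back to the original law is handled not via a Radon--Nikodym estimate but by a total-variation bound: tensorising the $\chi^2$-divergence between $\mathcal B(p_i)$ and $\mathcal B(\tilde p_i)$ yields $\kl\le 4c^2\ln(1/\delta)$ and hence $\dtv\le 1-\tfrac12\delta^{4c^2}\le 1-\delta$ for suitable $c$. Subtracting the TV bound from the Hoeffding probability gives $\P\big(S\ge \E[S]+(c-\tfrac{1}{\sqrt2})\sqrt{n\ln(2/\delta)}\big)\ge \delta/2$, and taking $c=1$ produces the constant $0.3$. This argument is uniform over the whole range $\delta\in(e^{-n},1)$ with no regime-splitting, which is exactly what your tilting approach is missing.
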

We conclude that the lower bound on the estimation error of the empirical median estimator matches the upper bound within a numerical constant.
Noteworthy, the smallest $\sqrt{n}$ values of $\sigma_i$ do not influence the behavior of the median.

\section{Discussion and prior work}
\label{sec:discussion}

\paragraph*{Prior work}
In the context of estimating the common mean of heteroscedastic random variables, several research projects have been conducted, contributing to the understanding and development of robust estimators. For instance, the article by~\cite{devroye2023mean} investigates the estimation of the common mean of independent random variables with symmetric and unimodal densities. The authors propose an adaptive estimator that combines the empirical median estimator and the modal interval estimator, demonstrating its robustness in handling noise and outliers.

Similarly, the works by~\cite{chierichetti2014learning} and~\cite{pensia2019estimating} explore an estimator that integrates the empirical median and the $k$-shorth interval estimator. This $k$-shorth estimator identifies the center of the shortest interval containing at least $k$ points, leveraging both location and density properties of the data for increased efficiency. These approaches align with a growing interest in hybrid estimators that incorporate multiple statistical principles to balance robustness and precision.

Additionally to these estimators, the empirical median, a cornerstone in robust statistics, has been extensively studied for its desirable properties in the presence of contamination and heavy-tailed distributions. Foundational works, such as~\cite{hodges2011estimates}, introduced the median as an efficient and robust alternative to the mean for estimating central location in rank-based methods. Further theoretical refinements were explored in~\cite{chernoff1961sequential}, which analyzed sequential methods incorporating the median, and~\cite{koenker1978regression}, which connected median-based estimation to regression quantiles, offering a framework for robust regression models capable of handling heteroscedasticity and non-linearity.

In recent years, the empirical median has been further adapted to meet modern statistical challenges. Research by~\cite{catoni2012challenging} developed robust estimators based on median-like properties for heavy-tailed distributions, while~\cite{diakonikolas2017being} employed the median-of-means technique to enhance robustness under adversarial contamination. These methods highlight the median's versatility and utility in high-dimensional settings, where traditional estimators may fail. Additional theoretical refinements, as explored in~\cite{rousseeuw1986robust}, further solidify the median's role in robust statistics by addressing challenges such as ties and varying variances, underscoring its ongoing relevance in statistical inference.

Several foundational works laid the groundwork for robust estimation techniques. The seminal article by Huber~\cite{huber1964robust} introduced robust estimators that mitigate the impact of outliers, a principle further extended by Tukey's biweight estimator~\cite{tukey1977exploratory}. In the context of heteroscedasticity, ~\cite{brown1971admissible} analyzed weighted means and highlighted the limitations of traditional approaches when variances are only partially known. Building on median-based methods, ~\cite{pj1987robust} proposed the Least Median of Squares estimator, which is particularly effective in settings with non-homogeneous variances and outliers where~\cite{catoni2012challenging} introduced an estimator tailored for heavy-tailed distributions, leveraging advanced tail-bound techniques.

In addition, the article by~\cite{yuan2020learning} delves into truncation-based methods for mean estimation under heteroscedasticity. The authors introduce an iterative estimator designed for datasets with a subset of variances bounded by a known threshold, while allowing the remaining variances to be arbitrarily large. The proposed estimator iteratively removes outliers by truncating a proportion of points at each step and calculating the mean of the remaining subset. This method highlights the importance of adaptive approaches in handling data with heterogeneous variances and unknown contamination levels.

Moreover, the work of~\cite{xia2019non} sheds light on the relationship between the error bound of the empirical median and the harmonic mean of the standard deviations when estimating the common median of independent Gaussian random variables. This insight underscores the critical role of variance heterogeneity in determining estimator performance and paves the way for more refined approaches in the Gaussian setting.

A more recent study by~\cite{compton2024near} introduces a novel "balanced estimator". This estimator, a variation of modal estimators, imposes an additional condition: the number of observations in the right neighborhood of the estimator must approximately equal those in the left neighborhood. This balanced approach addresses asymmetries in data distributions and enhances estimator performance under specific conditions, showcasing the evolving landscape of robust mean estimation techniques.

Another line of work considers the estimation of a location parameter under symmetric noise with unknown distribution. This approach, introduced by Stone \cite{stone1975adaptive}, consists in first estimating the noise density using kernel methods and then performing a Newton step to maximize the resulting log-likelihood. This yields an estimator closely related to the maximum likelihood estimator. Building on this idea, \cite{gupta2023finite} provide finite-sample optimality results, and show that the estimation error is governed by the Fisher information of the unknown noise distribution. Their analysis highlights the central role of Fisher information in the symmetric location model, even in the absence of full parametric knowledge.

Finally, the question of whether classical minimax lower bounds can be matched by actual estimators in finite-sample regimes has recently been addressed by~\cite{compton2025attainability} in the context of location estimation. In their work, the authors investigate whether the standard Le Cam-type lower bounds, based on binary hypothesis testing, can be attained by estimators under finite-sample constraints. They show that for symmetric mixtures of log-concave densities, the minimax rate can indeed be achieved (up to logarithmic factors). However, they also prove that for the broader class of symmetric unimodal distributions, no estimator can universally attain the two-point testing lower bound.

Historical context is also relevant to this line of research. The method of medians in the theory of errors, as initially proposed by Kolmogorov~\cite{kolmogorov1931median} and revisited in various forms 
%by Vaillant~\cite{vaillant2007median} and others
(\cite{kolmogoroff1941confidence},\cite{tukey1960survey},\cite{rousseeuw1986robust},\cite{pj1987robust}), provides a foundational perspective on robust estimators. These contributions have emphasized the advantages of median-based approaches in minimizing the influence of extreme values and ensuring stability across diverse distributions.

\paragraph{Comparison with previous results.}
The upper bound on the estimation error of the empirical median that we established improves and generalizes on the results of {Devroye, Lattanzi and Lugosi~\cite{devroye2023mean}} and {Xia~\cite{xia2019non}}. Indeed, our bound is applicable in a more general setting than that of %\jao{Xia~\cite{xia2019non}} 
{Xia~\cite{xia2019non}} and is more accurate than the one of {Devroye et al.~\cite{devroye2023mean}}. 

We can start by comparing our bound with that of ~\cite{xia2019non}, which studies the empirical median of $n$ independent but non-identically distributed random variables with a common median.
Xia's bound is as follows.
\begin{proposition} \text{\cite{xia2019non}}
   Let $\delta \in (0,1)$ and $X_i \sim \gaussdist (\theta,\sigma_i^2)$ where $\sigma_{1} > \frac{2\sqrt{2}}{0.35} \frac{\sqrt{n\ln{(\frac{2}{\delta})}}}{\sum_{i=1}^{n} \sigma_{i}^{-1}} $. Then, with probability at least $1-\delta$ we have
\[\lvert \hat{\med}(X_1,\dots , X_n) - \theta \rvert \le \frac{\sqrt{2}}{0.35} \frac{\sqrt{n\ln{(\frac{2}{\delta})}}}{\sum_{i=1}^{n} \sigma_{i}^{-1}} .\] 
\end{proposition}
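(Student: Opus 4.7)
The plan is to reduce the deviation of the empirical median to a binomial concentration problem, then apply Hoeffding's inequality after controlling the individual Bernoulli parameters via a linear lower bound on the Gaussian CDF.

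First, I would exploit the Gaussian symmetry of each $X_i$ around $\theta$: since $X_i - \theta$ and $\theta - X_i$ are identically distributed, $\P(\hat{\med}(X_1,\dots,X_n) > \theta + t) = \P(\hat{\med}(X_1,\dots,X_n) < \theta - t)$, so it suffices to bound one tail by $\delta/2$. The event $\hat{\med} > \theta + t$ forces strictly more than $n/2$ of the $X_i$'s to exceed $\theta + t$. Setting $B_i := \indic{X_i > \theta + t}$, the problem reduces to controlling $\P\big(\sum_{i=1}^n B_i > n/2\big)$, a tail probability for a sum of independent Bernoulli variables with parameters $p_i = \Phi(-t/\sigma_i) = \tfrac{1}{2} - (\Phi(t/\sigma_i) - \tfrac{1}{2})$, where $\Phi$ is the standard Gaussian CDF and $\phi$ its density.

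Second, I would derive a linear lower bound of the form $\Phi(x) - \tfrac{1}{2} \ge 0.35\, x$ valid on $x \in [0, 1/2]$. This follows at once from $\Phi(x) - \tfrac{1}{2} = \int_0^x \phi(u)\, \di u \ge x\, \phi(1/2)$ together with $\phi(1/2) = \tfrac{1}{\sqrt{2\pi}} e^{-1/8} \ge 0.35$. The hypothesis $\sigma_1 > \tfrac{2\sqrt{2}}{0.35} \cdot \tfrac{\sqrt{n \ln(2/\delta)}}{\sum_i \sigma_i^{-1}}$ is calibrated so that, at the target threshold $t$ stated in the proposition, $t/\sigma_i \le t/\sigma_1 \le 1/2$ uniformly in $i$, and hence the linear bound applies to every index. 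Consequently $p_i \le \tfrac{1}{2} - 0.35\, t/\sigma_i$, and summing yields $\sum_i p_i \le n/2 - 0.35\, t \sum_i \sigma_i^{-1}$.

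Third, I would invoke Hoeffding's inequality on the bounded independent variables $B_i \in \{0,1\}$:
\[\P\Big(\sum_{i=1}^n B_i > n/2\Big) \le \P\Big(\sum_{i=1}^n (B_i - p_i) \ge 0.35\, t \sum_{i=1}^n \sigma_i^{-1}\Big) \le \exp\!\left(-\frac{2(0.35)^2 t^2 \big(\sum_{i=1}^n \sigma_i^{-1}\big)^2}{n}\right).\]
Setting the right-hand side equal to $\delta/2$ and solving for $t$ recovers the threshold announced in the proposition (up to the precise numerical constant), and a union bound with the symmetric lower-tail estimate closes the argument.

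The main obstacle is the coupling between the slope of the linear bound and its range of validity: enlarging the interval $[0, x_0]$ shrinks the constant $\phi(x_0)$, while $x_0$ must be chosen large enough to cover $t/\sigma_1$ at the final threshold. The factor $2$ appearing in the assumption $\sigma_1 > (2\sqrt{2}/0.35)(\cdot)$ provides precisely the slack needed to close this circular dependency and guarantee $t/\sigma_1 \le 1/2$.
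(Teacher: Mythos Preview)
Your argument is correct. The paper does not actually prove this proposition---it is quoted from Xia's work as a point of comparison---but your proof is precisely the specialization of the paper's own proof of Theorem~\ref{prop:Upper-bound-median} to the regime guaranteed by the hypothesis on $\sigma_1$: in the paper's notation, that hypothesis forces $t \le \sigma_i$ for every $i$, so $k_t = 0$, the splitting of $B(t)$ collapses, and one is left exactly with the Hoeffding bound on $D(t)$ plus the linear lower bound $\P(0 \le X_i \le t) \ge C\,t/\sigma_i$. Your inequality $\Phi(x)-\tfrac12 \ge \phi(1/2)\,x \ge 0.35\,x$ on $[0,1/2]$ is the Gaussian instance of the paper's abstract density condition. (As you noted, the numerical constant you obtain, $1/(0.35\sqrt{2}) = \sqrt{2}/0.7$, is in fact a factor~$2$ sharper than the $\sqrt{2}/0.35$ quoted in the statement; the argument still closes because the assumption on $\sigma_1$ has the matching slack.)
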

The two bounds are of the same shape. However, the bound of \cite{xia2019non} is only valid under a specific condition on the $\sigma_i$'s, namely,
\begin{equation}
\label{eq:condition Xia}
    \sigma_{1} > \frac{2\sqrt{2}}{0.35} \frac{\sqrt{n\ln{(\frac{1}{\delta})}}}{\sum_{i=1}^{n} \sigma_{i}^{-1}}
\end{equation}
while our estimator ensures this bound for any configuration of $\sigma_{1}, \dots, \sigma_{n}$.
Furthermore, our bound reveals the fact that the lowest $\sqrt{n}$ $\sigma_i$'s do not influence the behavior of the median, while this fact is hidden when the condition (\ref{eq:condition Xia}) is imposed.

We then compare the estimation error of our estimator with that of the empirical median estimator studied in \cite{devroye2023mean} which does not make any assumption on $\sigma_1$. This is equivalent to comparing our bound obtained in Theorem \ref{prop:Upper-bound-median} to
\begin{equation}
\label{eq: condition Devroye}
   8e\sqrt{2}{\max}\left(\ln{\left(\frac{3}{\delta}\right)},\ln{(n+1)}\right)\beta^{-1}{\max}_{1\le j\le 8\sqrt{2n\ln({\frac{6}{\delta}})}}\frac{8\sqrt{2n\ln{\left(\frac{6}{\delta}\right)}}+1-j}{\sum_{i=j}^{n} \sigma_{i}^{-1}}. 
\end{equation}
where $\beta$ is a positive constant such that for all $t>0$, we have $\P(\lvert \frac{X_i-\theta}{\sigma_i} \rvert \ge t ) \ge \exp{(-\beta t)}$.
So, by taking
$j= \left\lfloor 4\sqrt{2n\ln{(\frac{6}{\delta})}} \right\rfloor$, we get that the bound (\ref{eq: condition Devroye}) is greater than \[ 32e\sqrt{2\pi}\ln{(n)}\frac{\sqrt{n\ln{(\frac{6}{\delta})}}}{\sum_{i=\left\lfloor4\sqrt{2n\ln{(\frac{6}{\delta})}}\right\rfloor+1}^{n}\sigma_{i}^{-1}}.\]
which includes an additional logarithmic factor compared to our bound.
Furthermore, the term under the maximum in (\ref{eq: condition Devroye}) corresponding to $j=1$ yields the bound of \cite{xia2019non}.
But again, given that it is a maximum over $j$, our bound performs better overall. Moreover, the bound of \cite{devroye2023mean} requires, in addition to the symmetry of the random variables, some tail assumptions on the distribution. In contrast, our bound is valid in a more general case requiring only the boundedness of the density around $0$ of the random variables. Thus, our result is more accurate but also applicable in a more general setting than the one of \cite{devroye2023mean}. 
\paragraph*{Comparison with the empirical mean estimator.}
We show now that the upper bound of the empirical median estimator is sharper than the lower bound of the empirical mean estimator which is given in Section \ref{sec:introduction}.
We want to show that 
\[\frac{\sqrt{\Big(\sum_{i=1}^{n}\sigma^2_{i}\Big)}}{n} \ge
\frac{\sqrt{n}}{\sum_{i=j+1}^{n}\sigma_{i}^{-1}}.\]
For that, we apply the following lemma.
\begin{lemma}
Let $\sigma_1 \le \sigma_2 \le \dots \le \sigma_n$ be positive real numbers. Then, for any integer $j \le \lfloor n/2 \rfloor - 1$, we have
\[
\frac{\sqrt{\sum_{i=1}^{n} \sigma_i^2}}{n} \ge c \cdot \frac{\sqrt{n}}{\sum_{i=j+1}^{n} \sigma_i^{-1}},
\]
for some absolute constant $c > 0$.
\end{lemma}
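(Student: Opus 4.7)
The plan is to reduce everything to two applications of Cauchy-Schwarz on the truncated sum $\sum_{i=j+1}^n$, together with the order statistic fact that discarding the smallest $\sigma_i$'s only decreases the sum of squares. Set $m := n-j$; the key observation is that since $j \le \lfloor n/2 \rfloor - 1$, we have $m \ge n/2$, which will absorb the $n^{3/2}$ on the right-hand side.

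First I would bound the harmonic-type sum from below. By Cauchy-Schwarz applied to the vectors $(\sigma_i^{1/2})_{i>j}$ and $(\sigma_i^{-1/2})_{i>j}$,
\[
m^2 \;=\; \Bigl(\sum_{i=j+1}^n 1\Bigr)^{\!2} \;\le\; \Bigl(\sum_{i=j+1}^n \sigma_i\Bigr)\Bigl(\sum_{i=j+1}^n \sigma_i^{-1}\Bigr).
\]
Next I would upper-bound the plain sum $\sum_{i=j+1}^n \sigma_i$ using Cauchy-Schwarz in the usual form: $\sum_{i=j+1}^n \sigma_i \le \sqrt{m}\,\sqrt{\sum_{i=j+1}^n \sigma_i^2}$. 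Substituting this into the previous display and rearranging gives
\[
\sum_{i=j+1}^n \sigma_i^{-1} \;\ge\; \frac{m^{3/2}}{\sqrt{\sum_{i=j+1}^n \sigma_i^2}}.
\]

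At this point I would simply enlarge the denominator by including the discarded terms: since all $\sigma_i^2 \ge 0$,
\[
\sqrt{\sum_{i=j+1}^n \sigma_i^2}\;\le\; \sqrt{\sum_{i=1}^n \sigma_i^2}.
\]
Combining the last two inequalities yields $\sqrt{\sum_{i=1}^n \sigma_i^2}\cdot \sum_{i=j+1}^n \sigma_i^{-1} \ge m^{3/2}$, and the constraint $j \le \lfloor n/2 \rfloor - 1$ gives $m \ge n/2$, hence $m^{3/2} \ge n^{3/2}/(2\sqrt{2})$. Dividing by $n^{3/2}$ yields the announced inequality with the explicit constant $c = 1/(2\sqrt{2})$.

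There is no real obstacle here: the whole argument is a double Cauchy-Schwarz, and the only subtle point is choosing the correct splittings so as to produce both $\sum \sigma_i^{-1}$ and $\sqrt{\sum \sigma_i^2}$ on the same side of the inequality. Notably, the ordering assumption $\sigma_1 \le \cdots \le \sigma_n$ is not really used beyond ensuring that the truncation in the statement removes genuinely small terms; the displayed inequality in fact holds for the same $c$ and \emph{any} choice of index set of size $\ge n/2$ over which the harmonic sum is taken.
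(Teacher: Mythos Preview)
Your argument is correct: the two applications of Cauchy--Schwarz give $\sqrt{\sum_{i=1}^n\sigma_i^2}\cdot\sum_{i>j}\sigma_i^{-1}\ge m^{3/2}$, and $m=n-j\ge n/2$ then yields the claim with $c=1/(2\sqrt{2})$.

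The paper argues differently, pivoting at $\sigma_k$ with $k=\lfloor n/2\rfloor$: it lower-bounds the left-hand side by $\sigma_k/\sqrt{2n}$ via $\sum_i\sigma_i^2\ge(n/2)\sigma_k^2$, and then seeks to upper-bound the right-hand side by $\sigma_k/\sqrt{n}$. That route leans on the ordering of the $\sigma_i$ in an essential way, whereas yours --- as you correctly note --- works for \emph{any} index set of size at least $n/2$. There is also a practical advantage to your version: the paper's written proof reverses an inequality in its second half (it establishes the \emph{upper} bound $\sum_{i>j}\sigma_i^{-1}\le n\,\sigma_{j+1}^{-1}$ and then divides as though it were a lower bound), so your self-contained Cauchy--Schwarz chain is in fact the cleaner argument to rely on here. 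Your constant $1/(2\sqrt{2})$ is half the paper's claimed $1/\sqrt{2}$, but only an absolute constant is needed for the comparison with the empirical mean.
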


\begin{proof}
Let $k := \lfloor n/2 \rfloor$. At least $n - k \ge n/2$ of the $\sigma_i$ satisfy $\sigma_i \ge \sigma_k$, then
\[
\sum_{i=1}^n \sigma_i^2 \ge \sum_{i=k+1}^n \sigma_i^2 \ge (n - k) \sigma_k^2 \ge \frac{n}{2} \sigma_k^2,
\]
which implies that
\[
\frac{\sqrt{\sum_{i=1}^n \sigma_i^2}}{n} \ge \frac{\sqrt{n \sigma_k^2 / 2}}{n} = \frac{\sigma_k}{\sqrt{2n}}.
\]
On the other hand, since the sequence is non-decreasing and $j+1 \le k$, we have that 
$\sigma_{j+1} \le \sigma_k$ which implies that $\sigma_{j+1}^{-1} \ge \sigma_k^{-1}$.
There are $n - j$ terms in the sum \(\sum_{i = j+1}^n \sigma_i^{-1}\), and each is at most \(\sigma_{j+1}^{-1}\), hence
\[
\sum_{i = j+1}^n \sigma_i^{-1} \le (n - j)\, \sigma_{j+1}^{-1} \le n\, \sigma_{j+1}^{-1}.
\]
Thus
\[
\frac{\sqrt{n}}{\sum_{i=j+1}^n \sigma_i^{-1}} \le \frac{\sqrt{n}}{n \sigma_{j+1}^{-1}} = \frac{\sigma_{j+1}}{\sqrt{n}} \le \frac{\sigma_k}{\sqrt{n}}.
\]
Combining both bounds, we obtain that
\[
\frac{\sqrt{\sum_{i=1}^n \sigma_i^2}}{n} \ge \frac{\sigma_k}{\sqrt{2n}} \ge \frac{1}{\sqrt{2}} \cdot \frac{\sigma_k}{\sqrt{n}} \ge \frac{1}{\sqrt{2}} \cdot \frac{\sqrt{n}}{\sum_{i=j+1}^n \sigma_i^{-1}}.
\]
Therefore, the claimed inequality holds with constant \(c = 1/\sqrt{2}\).
\end{proof}

\paragraph*{Extension of the result on the lower bound to non Gaussian random variables.}
In this paper, we established an upper bound on the estimation error of the empirical median that is also valid in a more general setting than that of Gaussian random variables. In fact, this is also the case for our lower bound result. In fact, the only argument related to the Gaussian setting is that for all $i \in \lint 1,n \rint$, $\P\left(\theta \le X_{i} \le \theta+t\right) \le \frac{t}{\sigma_{i}\sqrt{2\pi}}.$
Thus, this result on the lower bound extends to random variables with bounded densities around their median $\theta$. Thus, we can even consider random variables with infinite moments or also let the distribution have heavy tails, which was not covered by \cite{devroye2023mean}. 

\paragraph*{Limits of the empirical median estimator.}
The empirical median estimator is not optimal in a general heteroscedastic framework, as it fails to exploit the information carried by the $\sqrt{n}$ smallest variances. Our bounds reflect this limitation, since they remain unaffected by these variances. This shows that, although the empirical median is robust and simple, it cannot reach classical minimax optimality in such settings. Recently, this broader question has been clarified in the context of location estimation by~\cite{compton2025attainability}, who investigated whether classical minimax lower bounds can be attained by finite-sample estimators. They showed that, while minimax rates can indeed be achieved (up to logarithmic factors) for symmetric mixtures of log-concave densities, no estimator can universally attain the two-point testing lower bound for the broader class of symmetric unimodal distributions. Altogether, our findings illustrate the fundamental limits of the empirical median and highlight open directions for developing estimators that can match minimax lower bounds in general frameworks.

\section{Proofs}
\label{sec:proofs}
\paragraph{Assumption}
Throughout the proofs, we assume that $\theta=0$. This is without loss of generality since the median is invariant by translation.

\subsection{Proof of Theorem \ref{prop:Upper-bound-median}}
\begin{proof}
\label{proof upper bound}
Let $t\ge 0$. 
We want to control \[\P\Big(\hat{\med}(X_1,\dots , X_n) \ge t\Big).\]
We define
\[D(t)= \sum_{i=1}^{n} \Big(\mathbb{1}_{X_{i} \ge t}-\P(X_{i} > t)\Big) \quad \text{and} \quad B(t) =\sum_{i=1}^{n} \P(0 \le X_{i} \le t).\]
By Lemma \ref{lemma 2}, we know that $\hat{\med}(X_1,\dots , X_n) \le t$ is equivalent to  $D(t) \le B(t)$.
We thus start by bounding from above $D(t)$.
By Hoeffding's inequality {(\cite[Theorem~2.8 p.~34]{boucheron2013concentration})}, with probability at least equal to $1-\delta$ we have
\[D(t) \le \sqrt{\frac{1}{2}} \sqrt{n\ln{\left(\frac{1}{\delta}\right)}}.\]
The objective is now to bound from below $B(t)$.
We set $ k_{t} = \sup {(i: t> \sigma_{i} )}$.
We fix $C$ a positive constant such that $0<C<1$. By assumption of the theorem, for all $i \in \lint 1,n \rint$ there exists $\sigma_i$ such that if $0 \le t \le \sigma_i$ then $\P(0 \le X_i \le t) \ge C \frac{t}{\sigma_i}$.
Then,
\begin{align*}
    B(t)=\sum_{i=1}^{n} \P(0 \le X_{i} \le t) &= \sum_{i=1}^{k_{t}} \P(0 \le X_{i} \le t) + \sum_{i=k_{t}+1}^{n} P(0 \le X_{i} \le t) \nonumber\\
         %& \ge \sum_{i=1}^{k_{t}} 2C + \sum_{i=k_{t}+1}^{n} C \frac{t}{\sigma_{i}} \nonumber\\
        &\ge C k_{t} + C t \sum_{i=k_{t}+1}^{n} \sigma_{i}^{-1} .\nonumber
\end{align*}
Therefore, $\hat{\med}(X_1,\dots , X_n) \le t$ as soon as $B(t) \ge D(t)$ which is satisfied with probability at least $1- \delta$ if 
\begin{equation}
    \label{eq:condition on t general}
    k_{t} + t \sum_{i=k_{t}+1}^{n} \sigma_{i}^{-1} \ge \frac{1}{C\sqrt{2}} \sqrt{n\ln{\left(\frac{1}{\delta}\right)}}.
\end{equation}
We have that
\[ j := \left\lfloor \frac{1}{C\sqrt{2}} \sqrt{n\ln{\left(\frac{1}{\delta}\right)}} \right\rfloor < n,\]
under the assumption that $\delta > \exp{\left(-2nC^2\right)}$.
We then define
\[ t=t^{*}:= \frac{ \sqrt{n\ln{\left(\frac{1}{\delta}\right)}}}{C\sqrt{2}\sum_{i=j+1}^{n} \sigma_{i}^{-1}},\]
and we show that this choice of $t$ satisfies condition (\ref{eq:condition on t general}).
Two distinct cases may be identified.
\begin{itemize}
    \item If $k_{t} \ge j+1$, then (\ref{eq:condition on t general}) is verified for all $t$.
    \item If $k_{t} \le j$, then 
    \[t^{*} \sum_{i=k_{t}+1}^{n} \sigma_{i}^{-1} \ge t^{*} \sum_{i=j+1}^{n} \sigma_{i}^{-1}= \frac{1}{C\sqrt{2}} \sqrt{n\ln{\left(\frac{1}{\delta}\right)}}.\]
    It is therefore true that (\ref{eq:condition on t general}) is satisfied.
\end{itemize}
Thus, with probability at least $1-\frac{\delta}{2}$, we have 
\[  \hat{\med}(X_1,\dots , X_n) \le \frac{\sqrt{n\ln{\left(\frac{2}{\delta}\right)}}}{C\sqrt{2}\sum_{i=j+1}^{n}\sigma_{i}^{-1}} .\]
Consequently, with probability at least equal to $1-\delta$, 
\[ \lvert \hat{\med}(X_1,\dots , X_n) \rvert \le \frac{\sqrt{n\ln{\left(\frac{2}{\delta}\right)}}}{C\sqrt{2}\sum_{i=j+1}^{n}\sigma_{i}^{-1}} .\]
\end{proof}
\subsection{Proof of Theorem \ref{prop:Lower-bound-median}}
\begin{proof}
\label{proof lower bound}
In this proof we will use the following corollary which is an application of the Lemma \ref{lemme minoration} in the Gaussian case.
\begin{corollary}
\label{cor:corollaire cas gaussien}
Let $\delta \in (\exp{(-n)},1)$, $t>0$ and $(X_1, \dots ,  X_n)$ be $n$ independent 
  Gaussian random variables where $X_i \sim \gaussdist (\theta,\sigma_i^2)$. We define $S= \sum_{i=1}^{n} V_{i}$ such that $V_{i}= \mathbb{1}_{X_{i} \ge t}$ and $p_{i}(t)= \P(X_{i} \ge t)$.
    If $t \le 0.63\sigma_{1}$, then
    \[\P\left(S \ge \mathbb{E}[S] + 0.3\sqrt{n\ln{\left(\frac{2}{\delta}\right)}}\right) \ge \frac{\delta}{2}.\]
\end{corollary}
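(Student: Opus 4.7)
The plan is simply to check that the Bernoulli parameters $p_i(t) = \P(X_i \ge t)$ lie in $(1/4, 3/4)$ for every $i$, and then to invoke Lemma \ref{lemme minoration} directly. Nothing more than a standard normal tail computation is required.

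First, using the convention $\theta = 0$ stated at the beginning of Section \ref{sec:proofs}, each $X_i/\sigma_i$ is standard Gaussian, so $p_i(t) = 1 - \Phi(t/\sigma_i)$ where $\Phi$ denotes the standard normal cdf. Since $t > 0$, we immediately obtain $p_i(t) < 1/2 < 3/4$, which gives the upper half of the hypothesis of Lemma \ref{lemme minoration}.

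Second, for the lower bound $p_i(t) > 1/4$, I would use the ordering $\sigma_1 \le \sigma_2 \le \cdots \le \sigma_n$ to get $t/\sigma_i \le t/\sigma_1 \le 0.63$ for all $i$. Since $\Phi^{-1}(3/4) \approx 0.6745 > 0.63$, monotonicity of $\Phi$ yields $\Phi(t/\sigma_i) \le \Phi(0.63) < 3/4$, hence $p_i(t) = 1 - \Phi(t/\sigma_i) > 1/4$. This is the only place the specific constant $0.63$ in the statement gets used, and the numerical margin between $0.63$ and $\Phi^{-1}(3/4)$ is what makes the strict inequality work.

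Finally, the $V_i = \mathbb{1}_{X_i \ge t}$ are independent (inherited from the independence of the $X_i$) Bernoulli variables with parameters $p_i(t) \in (1/4, 3/4)$, and the range $\delta \in (\exp(-n), 1)$ is exactly the one required by Lemma \ref{lemme minoration}. Applying that lemma to $S = \sum_{i=1}^n V_i$ yields the desired inequality $\P(S \ge \E[S] + 0.3\sqrt{n \ln(2/\delta)}) \ge \delta/2$. There is no real obstacle here: the corollary is essentially a repackaging of Lemma \ref{lemme minoration} in the Gaussian setting, with the hypothesis $t \le 0.63\sigma_1$ engineered precisely so that every $p_i(t)$ falls in the admissible interval.
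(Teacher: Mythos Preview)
Your proposal is correct and follows essentially the same route as the paper: verify that each $p_i(t)\in(1/4,3/4)$ using $t>0$ for the upper half and the ordering $\sigma_1\le\sigma_i$ together with $t\le 0.63\sigma_1$ for the lower half, then invoke Lemma~\ref{lemme minoration}. The only cosmetic difference is that the paper bounds $\P(X\ge t/\sigma_1)\ge \tfrac12 - \tfrac{t}{\sigma_1\sqrt{2\pi}}$ via the density, whereas you appeal directly to the numerical value $\Phi^{-1}(3/4)\approx 0.6745>0.63$; both arguments justify the same conclusion.
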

The corollary follows immediately from Lemma \ref{lemme minoration} and the fact that under assumption of Corollary \ref{cor:corollaire cas gaussien} we have that for all $i \in \lint 1,n \rint$, $p_{i}(t) \in [\frac{1}{4},\frac{3}{4}]$.
We want to bound from below the probability 
 \[\P\Big(\hat{\med}(X_1,\dots , X_n) \ge t\Big).\]
According to the Lemma \ref{lemma 2}, we have that $\hat{\med}(X_1,\dots , X_n) \ge t$ is equivalent to $D(t) \ge B(t)$.
We want to control $D(t)$ and $B(t)$.
We start by bounding from above $B(t)$. To do this, we have
\begin{align*}      
      \P(0 \le X_{i} \le t) &= \int_{0}^{t} \frac{1}{\sqrt{2\pi\sigma_{i}^2 }} \exp{(-\frac{x^2}{2\sigma_{i}^2})} \,\di x \\
      &\leq \int_{0}^{t} \frac{1}{\sqrt{2\pi\sigma_{i}^2 }} \,\di x 
      =  \frac{t}{\sigma_{i}\sqrt{2\pi}} \nonumber .
    \end{align*}
We then have that 
\begin{align*}
    B(t) &=  \sum_{i=1}^{k_{t}}\P(0 \le X_{i} \le t)+ \sum_{i=k_{t}+1}^{n}\P(0 \le X_{i} \le t)  \\
    &\le \sum_{i=1}^{k_{t}}1+ \sum_{i=k_{t}+1}^{n}\frac{t}{\sigma_{i}\sqrt{2\pi}} \nonumber = k_{t}+\frac{t}{\sqrt{2\pi}}\sum_{i=k_{t}+1}^{n}\sigma_{i}^{-1} \nonumber .
\end{align*} 
The objective is now to bound from below $D(t)$.
We have
\[D(t)= \sum_{i=1}^{k_{t}} \Big(\mathbb{1}_{X_{i} \ge t}-\P(X_{i} > t)\Big)+ \sum_{i=k_{t}+1}^{n} \Big(\mathbb{1}_{X_{i} \ge t}-\P(X_{i} > t)\Big).\]
If $i \le k_{t}$, which is equivalent to $t>\sigma_{i}$, it follows that $\P(X_{i}>t) = \P(\frac{X_{i}}{\sigma_i} > \frac{t}{\sigma_i}) \le \frac{1}{4}$. 
Thus, by Hoeffding's inequality applied to the first sum, we have \[\P\left(\sum_{i=1}^{k_{t}} \Big(\mathbb{1}_{X_{i} \ge t}-\P(X_{i}>t)\Big) \ge -\frac{1}{\sqrt{2}}\sqrt{k_{t}\ln{\left(\frac{1}{\delta}\right)}}\right) \ge 1-\delta .\]
Next, by Corollary \ref{cor:corollaire cas gaussien} we obtain 
\[ \P\left(\sum_{i=k_{t}+1}^{n} \Big(\mathbb{1}_{X_{i} \ge t}-\P(X_{i}>t)\Big) \ge (c-\frac{1}{\sqrt{2}})\sqrt{(n-k_{t})\ln{\left(\frac{1}{2\delta}\right)}}\right) \ge 2\delta,\]
with $c-\frac{1}{\sqrt{2}} \le 0.3$ and $\delta \le \frac{1}{4}$ such that $\ln{(\frac{1}{2\delta})} \ge \frac{1}{2}\ln{(\frac{1}{\delta})}$.
So, with probability at least $\delta$, we have that
\begin{align*}
        D(t) &\ge -\frac{1}{\sqrt{2}}\sqrt{k_{t}\ln{\left(\frac{1}{\delta}\right)}}+ \left(c-\frac{1}{\sqrt{2}}\right)\sqrt{(n-k_{t})\ln{\left(\frac{1}{\delta}\right)}}  \\
        &= \Big(\left(c-\frac{1}{\sqrt{2}}\right)\sqrt{n-k_{t}}-\frac{1}{\sqrt{2}}\sqrt{k_{t}}\Big)\sqrt{\ln{\left(\frac{1}{\delta}\right)}}. \nonumber
\end{align*}
Since $c\le \frac{1}{\sqrt{2}}+0.3$, by taking $c=1$ we obtain with probability at least $\delta$ that 
\[ D(t) \ge \Big(\left(1-\frac{1}{\sqrt{2}}\right)\sqrt{n-k_{t}}-\frac{1}{\sqrt{2}}\sqrt{k_{t}}\Big)\sqrt{\ln{\left(\frac{1}{\delta}\right)}}.\]
Now, we want to fix $k_{t}$ such that
\begin{equation}
\label{eq:admissible}
    k_{t} \le \frac{n}{2} \quad \text{and } \quad k_{t} \le \frac{n(\sqrt{2}-1)^2}{8},
\end{equation}
which will imply that $\left
(1-\frac{1}{\sqrt{2}}\right)\sqrt{n-k_{t}}-\frac{1}{\sqrt{2}}-\sqrt{k_{t}} \ge  \frac{\sqrt{2}-1}{2\sqrt{2}}\sqrt{\frac{n}{2}}$
and we thus obtain that, with probability at least $\delta$, that
\[D(t) \ge \frac{\sqrt{2}-1}{4}\sqrt{n\ln{\left(\frac{1}{\delta}\right)}}.\] 
Subsequently, we have that, with probability at least $\delta$,
\[\hat{\med}(X_1,\dots , X_n) \ge t \text{ is verified if  }\frac{\sqrt{2}-1}{4}\sqrt{n\ln{\left(\frac{1}{\delta}\right)}} \ge k_{t}+\frac{t}{\sqrt{2\pi}}\sum_{i=k_{t}+1}^{n}\sigma_{i}^{-1}.\]
We choose $k_{t}$ such that 
\[k_{t}= \left\lfloor \frac{1}{2}\frac{\sqrt{2}-1}{4}\sqrt{n\ln{(\frac{1}{\delta})}} \right\rfloor = \left\lfloor \frac{\sqrt{2}-1}{8}\sqrt{n\ln{(\frac{1}{\delta})}} \right\rfloor ,\]
We first check that this value of $k_{t}$ is admissible meaning that it satisfies (\ref{eq:admissible}).
Since $\frac{n(\sqrt{2}-1)^2}{8} \le \frac{n}{2}$, this is equivalent to verifying that $\frac{\sqrt{2}-1}{8}\sqrt{n\ln{\left(\frac{1}{\delta}\right)}} \le \frac{n(\sqrt{2}-1)^2}{8}.$
This condition is satisfied if \[\delta > \exp{\left(-(\sqrt{2}-1)^{2}n\right)}.\] 
We then have that $\hat{\med}(X_1,\dots , X_n) \ge t$ with probability at least $\delta$ as soon as 
\[\frac{\sqrt{2}-1}{8}\sqrt{n\ln{\left(\frac{1}{\delta}\right)}} \ge \frac{t}{\sqrt{2\pi}}\sum_{i=k_{t}+1}^{n}\sigma_{i}^{-1},\]
which is equivalent to
\[ t \le \frac{2-\sqrt{2}}{8}\sqrt{\pi}\frac{\sqrt{n\ln{\left(\frac{1}{\delta}\right)}}}{\sum_{i=k_{t}+1}^{n}\sigma_{i}^{-1}}.\]
We can therefore conclude that for all $ \delta \in \left(\exp{\left(-(\sqrt{2}-1)^{2}n\right)},\frac{1}{4}\right) $, we have with probability at least equal to $\delta$ that
\[ \hat{\med}(X_1,\dots , X_n) \ge \frac{2-\sqrt{2}}{8}\sqrt{\pi}\frac{\sqrt{n\ln{(\frac{1}{\delta})}}}{\sum_{i=k_{t}+1}^{n}\sigma_{i}^{-1}},\]
with $k_{t}= \left\lfloor \frac{\sqrt{2}-1}{8}\sqrt{n\ln{(\frac{1}{\delta})}} \right\rfloor$.
\end{proof}
\subsection{Proof of Lemma \ref{lemme minoration}}
\begin{proof}

\label{subsec:preuve lemme minoration}
Let $t>0$ and $S=\sum_{i=1}^{n} V_{i}$
with $V_{i} \sim \mathcal{B}({p_{i}}) $, for $i \in \lint 1,n \rint$.
We consider a change of measure meaning that we move from $S$, a sum of random variables with Bernoulli distribution of parameter $p_{i}$ to $\Tilde{S}$, a sum of random variables with Bernoulli distribution of parameter $\Tilde{p_{i}}$ such that 
\[\Tilde{p_{i}}=p_{i}+ \Delta = p_{i}+ c\sqrt{\frac{\ln{(\frac{1}{\delta})}}{n}} , \quad \Delta=c\sqrt{\frac{\ln{\left(\frac{1}{\delta}\right)}}{n}} , \quad \delta > \exp{(-n)},\]
where $c$ is a positive constant to be determined later.
So we have that 
\[\mathbb{E}[\Tilde{S}] = \mathbb{E}[S]+ c\sqrt{n\ln{\left(\frac{1}{\delta}\right)}}.\]
The advantage of this change of measure is that we end up with new random variables whose expectation is large enough to reduce the probability of deviation but whose distribution function remains close to that of the initial random variables.

We assume that for all $i \in \lint 1,n \rint$, $p_{i} \in (\frac{1}{4},\frac{3}{4})$. 
Let us show that $\tilde{p_{i}}$ is also in the interval $\left(\frac{1}{4},\frac{3}{4}\right)$ for all $i \in \lint 1,n \rint$.
We know that $\tilde{p_{i}}= p_{i}+ \Delta$, so $\tilde{p_{i}} > \frac{1}{4}.$
And since $p_{i} < \frac{3}{4}$, there exists $\epsilon$ such that $p_{i}=\frac{3}{4}-\epsilon$. We deduce that 
$\tilde{p_{i}} < \frac{3}{4}$ as long as
$c < \eps\sqrt{\frac{n}{\ln{(\frac{1}{\delta})}}}$
that is verified for $n$ sufficiently large. Since $p_{i}$ and $\Tilde{p_{i}}$ are of order $\frac{1}{2}$, the Hoeffding bound is optimal.
Now, let $c' \in \mathbb{R}$ and we want to bound from below \[\P\left(S-\mathbb{E}[S] \ge c'\sqrt{n\ln{\left(\frac{1}{\delta}\right)}}\right). \]
For that, we need to bound from below
$ \P\left(\Tilde{S} \ge \mathbb{E}[S] + c'\sqrt{n\ln{\left(\frac{1}{\delta}\right)}}\right), $ and to bound from above $\lvert \P(\Tilde{S} \ge t)- \P(S \ge t) \rvert.$
We start by reducing from below $\P\left(\Tilde{S} \ge \mathbb{E}[S] + c'\sqrt{n\ln{\left(\frac{1}{\delta}\right)}}\right).$
We have that
\begin{align*}
     \P\left(\Tilde{S} \ge \mathbb{E}[S] + c'\sqrt{n\ln{\left(\frac{1}{\delta}\right)}}\right) 
        &=  \P\left(\Tilde{S} \ge \mathbb{E}[\Tilde{S}]-c\sqrt{n\ln{\left(\frac{1}{\delta}\right)}} + c'\sqrt{n\ln{\left(\frac{1}{\delta}\right)}}\right) \\
        &= \P\left(\Tilde{S} \ge \mathbb{E}[\Tilde{S}]-(c-c')\sqrt{n\ln{\left(\frac{1}{\delta}\right)}} \right).
\end{align*}       
We define $c''=c-c' >0$ and we want to show that
\[\P\left(\Tilde{S} \ge \mathbb{E}[\Tilde{S}]-c''\sqrt{n\ln{\left(\frac{1}{\delta}\right)}} \right) \ge 1- \frac{\delta}{2}.\]
This is equivalent to showing that
\[\P\left(\Tilde{S} \le \mathbb{E}[\Tilde{S}]-c''\sqrt{n\ln{\left(\frac{1}{\delta}\right)}} \right) \le \frac{\delta}{2}. \]
Moreover, $-\Tilde{S}= \sum_{i=1}^{n}(-\Tilde{V_{i}}) $ where $(-\Tilde{V_{i}})$ are $\frac{1}{8}$-Sub-Gaussian random variables.
By Hoeffding's inequality, we then have that
\[\P\left(-\Tilde{S}-\mathbb{E}[-\Tilde{S}] \ge \frac{1}{\sqrt{2}}\sqrt{n\ln{\left(\frac{2}{\delta}\right)}} \right) \le \frac{\delta}{2}, \]
which implies that 
$\P\left(\Tilde{S} \ge \mathbb{E}[\Tilde{S}]-\frac{1}{\sqrt{2}}\sqrt{n\ln{\left(\frac{2}{\delta}\right)}} \right) \ge 1- \frac{\delta}{2}$ and thus we obtain that $c''= \frac{1}{\sqrt{2}}$.
Consequently, we obtain that 
\[\P\left(\Tilde{S} \ge \mathbb{E}[S] + \left(c-\frac{1}{\sqrt{2}}\right)\sqrt{n\ln{\left(\frac{2}{\delta}\right)}}\right) \ge 1-\frac{\delta}{2}, \quad \text{and } c'= c-\frac{1}{\sqrt{2}} >0.\]
We then want to bound from above $\lvert \P(\Tilde{S} \ge t)- \P(S \ge t) \rvert.$
We have that
\begin{align*}
    \lvert \P(\Tilde{S} \ge t)- P(S \ge t) \rvert &\le \underset{A \in \mathbb{B(\R)}}{\sup}{\lvert \P(\Tilde{S} \in A)- \P(S \in A) \rvert}  \\
        &= \underset{A \in \mathbb{B(\R)}}{\sup}{\lvert \P_{\Tilde{S}}(A)-\P_{S}(A)  \rvert}.
\end{align*}
We define $P= \P_{S}$ and $Q=\P_{\Tilde{S}}$ such that
\[S=\Phi(X_{1},\dots,X_{n})=X_{1}+\dots+ X_{n}, \quad  X_{i} \sim \mathcal{B}({p_{i}}) \] 
and
\[\Tilde{S}=\Phi(Y_{1},\dots,Y_{n})=Y_{1}+\dots+ Y_{n},  \quad Y_{i} \sim \mathcal{B}(\Tilde{{p_{i}}}).\]
We have that
\[\lvert \P(\Tilde{S} \ge t)- \P(S \ge t) \rvert \le \dtv(P,Q), \quad  \dtv(P,Q)= \underset{A \in \mathbb{B(\R)}}{\sup}{\lvert P(A)-Q(A) \rvert}.\]
By \cite[(2.25) p.89]{tsybakov2009nonparametric}, we have that  
$\dtv(P,Q) \le 1- \frac{1}{2}\exp{(-\kll{P}{Q})}$ and by Lemma $2.7$ in \cite{tsybakov2009nonparametric}, we have that  
$\kll{P}{Q} \le \log{(1+\chi^2(P,Q))}$
where $\chi^2$ denotes the $\chi^2$ divergence.
Besides, for $P_{i},Q_{i}, i \in 1,\dots,n$ laws on $E_{i}$ and for $P=\bigotimes _{i=1}^n P_i$ and $Q=\bigotimes _{i=1}^n Q_i$ product laws on $\prod_{i=1}^{n} E_{i}$, we have that   $1+\chi^2(P,Q)= \prod_{i=1}^{n}(1+\chi^2(P_i,Q_i))$.
Thus, 
\[\kll{P}{Q} \le n(1+\chi^2(P_i,Q_i)) \le n\chi^2(P_i,Q_i).\]
Since $p_i,\tilde{p_i} \in [\frac{1}{4},\frac{3}{4}]$, we have that 
$\chi^2(P_i,Q_i)= \frac{(p_i-\tilde{p_i})^2}{\tilde{p_i}} \le 4(p_i-\tilde{p_i})^2=4c^2\frac{\ln\left({\frac{1}{\delta}}\right)}{n}  .$
We then obtain that 
$\kll{P}{Q} \le 4c^2\ln\left({\frac{1}{\delta}}\right)$.
So, 
\begin{align*}
    \dtv(P,Q) &\le 1- \frac{\exp{(-4c^2\ln({\frac{1}{\delta}}))}}{2} 
        = 1- \frac{\exp{(4c^2\ln({\delta})})}{2}. 
\end{align*} 
We want to have $\dtv(P,Q) \le 1-\delta$. To do this, we let $c > \sqrt{\frac{\ln{(2\delta)}}{4\ln{(\delta)}}} \ge \frac{1}{2}$. 
We conclude that 
\[\lvert P(\Tilde{S} \ge t)- P(S \ge t) \rvert \le 1- \delta.\]
Finally, by combining the two following results
\begin{itemize}
    \item $P\left(\Tilde{S} \ge \mathbb{E}[S] + (c-\frac{1}{\sqrt{2}})\sqrt{n\ln{(\frac{2}{\delta})}}\right) \ge 1-\frac{\delta}{2}$,
    \item $\lvert P(\Tilde{S} \ge t)- P(S \ge t) \rvert \le 1- \delta$,
\end{itemize}
we obtain that 
\begin{align*}
    \P\left(S \ge \mathbb{E}[S] + (c-\frac{1}{\sqrt{2}})\sqrt{n\ln{(\frac{2}{\delta})}}\right) &\ge \P\left(\Tilde{S} \ge \mathbb{E}[S] + (c-\frac{1}{\sqrt{2}})\sqrt{n\ln{(\frac{2}{\delta})}}\right) \\
 &- \Big\lvert \P\left(S \ge \mathbb{E}[S] + (c-\frac{1}{\sqrt{2}})\sqrt{n\ln{(\frac{2}{\delta})}}\right)\\
& - \P\left(\Tilde{S} \ge \mathbb{E}[S] + (c-\frac{1}{\sqrt{2}})\sqrt{n\ln{(\frac{2}{\delta})}}\right) \Big\rvert,
\end{align*}
 and so that
    $ \P\left(S \ge \mathbb{E}[S] + (c-\frac{1}{\sqrt{2}})\sqrt{n\ln{(\frac{2}{\delta})}}\right) \ge \frac{\delta}{2}$
for any $c > {\max}\left(\frac{1}{2},\frac{1}{\sqrt{2}}\right) > \frac{1}{\sqrt{2}}$. Taking $c=1$, we obtain that 
\[\P\left(S \ge \mathbb{E}[S] + 0.3\sqrt{n\ln{(\frac{2}{\delta})}}\right) \ge \frac{\delta}{2} .\]
\end{proof}
\subsection{Elementary lemmas and proofs}
In this section, we provide for the sake of completeness statements and short proofs of the facts used.
\begin{lemma}
    \label{lemma 2}
     Let $t \ge 0$, then 
    \[\hat{\med}(X_1,\dots , X_n) \ge t \quad \text{if and only if} \quad \sum_{i=1}^{n} \Big(\mathbb{1}_{X_{i} \ge t}-\P(X_{i}>t)\Big) \ge \sum_{i=1}^{n} \P(0 \le X_{i} \le t).\]
\end{lemma}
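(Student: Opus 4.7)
The plan is to reduce the equivalence to a straightforward accounting identity, using only the definition of the empirical median together with the standing assumption $\theta = 0$ and the symmetry (hence $0$ being a median) of each $X_i$. The statement $\hat{\med}(X_1,\dots,X_n) \ge t$ is, by definition of the empirical median, equivalent to requiring that at least half of the observations lie in $[t, +\infty)$, i.e.\
\[
\sum_{i=1}^n \indic{X_i \ge t} \ge \frac{n}{2}.
\]
The goal is then to rewrite $n/2$ in terms of $\sum_i \P(X_i > t)$ and $\sum_i \P(0 \le X_i \le t)$, which is exactly what symmetry provides.

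Concretely, for each $i$, symmetry of $X_i$ around $\theta = 0$ gives $\P(X_i \ge 0) = 1/2$. Since the $X_i$ are continuous (densities are implicit throughout the paper), $\P(X_i = 0) = 0$, and one may split
\[
\P(X_i > t) \;=\; \P(X_i > 0) - \P(0 < X_i \le t) \;=\; \frac{1}{2} - \P(0 \le X_i \le t).
\]
Summing over $i$ yields
\[
\sum_{i=1}^n \P(X_i > t) \;=\; \frac{n}{2} - \sum_{i=1}^n \P(0 \le X_i \le t).
\]
Substituting this into the condition $\sum_i \indic{X_i \ge t} \ge n/2$ and rearranging gives precisely
\[
\sum_{i=1}^n \bigl(\indic{X_i \ge t} - \P(X_i > t)\bigr) \;\ge\; \sum_{i=1}^n \P(0 \le X_i \le t),
\]
which is the desired equivalence.

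There is essentially no obstacle here; the lemma is a bookkeeping identity whose only subtlety is matching the convention for the empirical median (at least $n/2$ observations above the threshold) and using $\P(X_i = 0) = 0$ to identify $\P(X_i \ge t)$ with $\P(X_i > t)$. Both conventions are consistent with how the lemma is subsequently invoked in the proofs of Theorems \ref{prop:Upper-bound-median} and \ref{prop:Lower-bound-median}, so the argument above suffices.
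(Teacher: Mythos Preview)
Your proof is correct and follows essentially the same approach as the paper: both start from the definition $\hat{\med}(X_1,\dots,X_n)\ge t \iff \sum_i \indic{X_i\ge t}\ge n/2$, then use the symmetry assumption (with $\theta=0$) to rewrite $n/2-\sum_i\P(X_i>t)$ as $\sum_i\P(0\le X_i\le t)$. The paper's version is just terser, leaving implicit the use of $\P(X_i\ge 0)=1/2$ that you spell out.
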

\begin{proof}
     We have that $\hat{\med}(X_1,\dots , X_n)\ge t$ is equivalent to
    $\sum_{i=1}^{n} \mathbb{1}_{X_{i} \ge t} \ge \frac{n}{2}$
    which is also equivalent to 
    \[\sum_{i=1}^{n} \left(\mathbb{1}_{X_{i} \ge t}-\P(X_{i}>t)\right) \ge \frac{n}{2}- \sum_{i=1}^{n}\P(X_{i}>t)\]
    or also
    \[\sum_{i=1}^{n} \left(\mathbb{1}_{X_{i} \ge t}-\P(X_{i}>t)\right) \ge \sum_{i=1}^{n} \P(0 \le X_{i} \le t) \, . 
    \qedhere
    \]
\end{proof}

\paragraph{Proof of Corollary \ref{cor:Upper-bound-median}}
\begin{proof}

Let $t \ge 0$. It suffices to find the constant $C$ satisfying that for all $i \in \lint 1,n \rint$ there exists $\sigma_i$ such that if $0 \le t \le \sigma_i$ then $\P(0 \le X_i \le t) \ge C \frac{t}{\sigma_i}$ and then apply the same proof of Theorem~\ref{prop:Upper-bound-median}.
If $t \le  \sigma_{i}$,
        then
            \begin{align*}
                \P(0\le X_{i} \le t) &=\int_{0}^{t} \frac{1}{\sqrt{2\pi\sigma_{i}^2 }} \exp{\left(-\frac{x^2}{2\sigma_{i}^2}\right)} \,\di x \\
                 \ge& \int_{0}^{t} \frac{1}{\sqrt{2\pi\sigma_{i}^2 }} \exp{(-\frac{1}{2})} \,\di x  = \frac{\exp{(-\frac{1}{2})}}{\sqrt{2\pi}} \frac{t}{\sigma_{i}}.
            \end{align*} 
We therefore have that with probability at least equal to $1-\delta$, 
\[ \lvert \hat{\med}(X_1,\dots , X_n) \rvert \le \frac{\sqrt{\pi}\exp{(2)}\sqrt{n\ln{\left(\frac{2}{\delta}\right)}}}{\sum_{i=j+1}^{n}\sigma_{i}^{-1}} .\]
\end{proof}

\paragraph{Proof of Corollary 
\ref{cor:corollaire cas gaussien}}
\begin{proof}
\label{proof corollaire}
We apply Lemma \ref{lemme minoration} to 
$S= \sum_{i=1}^{n} V_{i}= \sum_{i=1}^{n} \mathbb{1}_{X_{i} \ge t} $
 with $V_{i} \sim \mathcal{B}({p_{i}(t)}) $ and $p_{i}(t)= \P(X_{i} \ge t)$.
 For that, we just need to verify that if $t \le 0.63\sigma_{1}$ we have that for all $i \in \lint 1,n \rint$, $p_{i}(t) \in [\frac{1}{4},\frac{3}{4}]$.
We have that 
$p_{i}(t)= \P(X_{i} \ge t)= \psi(\frac{t}{\sigma_{i}}),$ where $\psi$ is the cumulative distribution function of a standard Gaussian random variable.
Since $t>0$, we have that $p_{i}(t) \le \frac{1}{2} \le \frac{3}{4}$.
Furthermore, $p_{i}(t) \ge \frac{1}{4}$. Indeed, 
\[p_{i}(t)= \P(X_{i} \ge t)= \P\left(X \ge \frac{t}{\sigma_{i}}\right) \ge \P\left(X \ge \frac{t}{\sigma_{1}}\right) \ge \frac{1}{2}- \frac{t}{\sigma_{1}}\frac{1}{\sqrt{2\pi}},\]
where $X$ is a standard Gaussian random variable.
Therefore, if 
\[t \le \frac{\sqrt{2\pi}}{4}\sigma_{1} \le 0.63\sigma_{1},\] 
we have that $p_{i}(t) \ge \frac{1}{4}$ for all $i \in \lint 1,n \rint$.
\end{proof}

\paragraph{Acknowledgments}
I would like to express my gratitude to my two PhD supervisors for their contribution to the realisation of this work. I am indebted to Jaouad Mourtada for his crucial assistance in conceiving the ideas and elaborating the proofs, and to Alexandre Tsybakov for meticulously proofreading the manuscript and offering invaluable advice.
\appendix

\bibliographystyle{abbrv}
\bibliography{biblio}
\end{document}